\newcounter{spec}
{\end{list}}
\renewcommand{\P}{{\mathbf P}}
\newcommand{\C}{{\mathbb C}}
\renewcommand{\lim}{\varprojlim}
\numberwithin{equation}{section}
\newfont{\gothic}{eufb10}
\newtheorem{theo}{Th\'{e}or\`{e}me}[section]
\newtheorem{prop}[theo]{Proposition}
\newtheorem{lem}[theo]{Lemme}
\newtheorem{cor}[theo]{Corollaire}
\theoremstyle{definition}
\newtheorem{defi}[theo]{D\'efinition}
\theoremstyle{remark}
\newtheorem{rema}[theo]{Remarque}
\newcommand{\bthe}{\begin{theo}}
\newcommand{\ble}{\begin{lem}}
\newcommand{\bpr}{\begin{prop}}
\newcommand{\bco}{\begin{cor}}
\newcommand{\bde}{\begin{defi}}
\newcommand{\ethe}{\end{theo}}
\newcommand{\ele}{\end{lem}}
\newcommand{\epr}{\end{prop}}
\newcommand{\eco}{\end{cor}}
\newcommand{\ede}{\end{defi}}
\newcommand{\F}{{\mathbb F}}
\DeclareFontFamily{U}{wncy}{}
\DeclareFontShape{U}{wncy}{m}{n}{%
<5>wncyr5%
<6>wncyr6%
<7>wncyr7%
<8>wncyr8%
<9>wncyr9%
<10>wncyr10%
<11>wncyr10%
<12>wncyr6%
<14>wncyr7%
<17>wncyr8%
<20>wncyr10%
<25>wncyr10}{}
\DeclareMathAlphabet{\cyr}{U}{wncy}{m}{n}
\begin{document}

\title[Surfaces de del Pezzo de degr\'e 4 ]{Surfaces de del Pezzo de degr\'e 4 sur un corps $C_{1}$}
\author{Jean-Louis Colliot-Th\'el\`ene}
\address{C.N.R.S., Universit\'e Paris Sud\\Math\'ematiques, B\^atiment 425\\91405 Orsay Cedex\\France}
\email{jlct@math.u-psud.fr}
 
\date{soumis le 5 avril  2014, r\'evis\'e le 2 ao\^{u}t  2015;  \`a para\^{\i}tre dans Taiwanese Journal of Mathematics}
\maketitle

\begin{abstract}  
Sur toute surface de del Pezzo de degr\'e 4 sur un corps $C_{1}$  de caract\'eristique z\'ero,
tous les points rationnels sont $R$-\'equivalents. Plus g\'en\'eralement, ceci vaut sur tout
corps parfait infini de caract\'eristique diff\'erente de 2.
\end{abstract}

\begin{altabstract}
On a del Pezzo surface of degree 4 over an infinite  $C_{1}$-field of characteristic
zero,  all rational points are $R$-equivalent. This more generally holds over any
infinite perfect $C_{1}$-field of characteristic different from 2.
 \end{altabstract}

\section{Introduction}

Soient $k$ un corps,  $X$ une $k$-vari\'et\'e alg\'ebrique et $X(k)$
l'ensemble de ses points rationnels.
 On
dit que deux points   $P$ et $Q$ de $X(k)$ sont directement $R$-li\'es
  s'il existe un ouvert $D $ de la droite projective $\P^1_{k}$
et un $k$-morphisme $D \to X$ tels que $P$ et $Q$ appartiennent \`a l'image de
$D(k)$. La $R$-\'equivalence sur $X(k)$ est la relation d'\'equivalence engendr\'ee
par cette relation \'el\'ementaire. On note $X(k)/R$ l'ensemble des classes d'\'equivalence.

Pour $k=\F$ un corps fini et $X$ une hypersurface lisse de degr\'e $d \leq n$ dans  $\P^n_{\F}$,
 un cas particulier  d'une conjecture de Koll\'ar (voir \cite{K})  affirme que l'ensemble $X(\F)/R$ est r\'eduit \`a un \'el\'ement.
Pour $d=3$ et $F$ de cardinal au moins $8$,  ceci a \'et\'e \'etabli par Swinnerton-Dyer \cite{SD} et Koll\'ar \cite{K}.
Un corps fini satisfait la propri\'et\'e $C_{1}$.

Soit $k$ un corps $C_{1}$ de caract\'eristique z\'ero, et soit $X$
une $k$-vari\'et\'e projective, lisse, g\'eom\'etriquement connexe.
Supposons $X$ 
g\'eom\'etriquement rationnellement connexe (voir \cite{Klivre}).
 Cette propri\'et\'e est par exemple satisfaite 
si  $X$ est une hypersurface  de degr\'e $d \leq n$
dans  $\P^n_{k}$, ou si $X$ est g\'eom\'etriquement rationnelle, par exemple si
c'est une surface fibr\'ee en coniques sur la droite projective, ou si c'est une surface
de del Pezzo.

Sans pr\'ecision suppl\'ementaire sur $X$ ou sur $k$,
les deux questions  suivantes sont ouvertes :

(1) (Serge Lang) L'ensemble $X(k)$ des points $k$-rationnels de $X$ est-il non vide  ?

(2) \cite[Question 10.11]{CTCIME} Y a-t-il au plus une classe de $R$-\'equivalence sur $X(k)$   ?

Ces questions ont  \'et\'e plus particuli\`erement \'etudi\'ees pour $k=\C(B)$ le corps
des fonctions d'une courbe complexe et $k=\C((t))$ le corps des s\'eries formelles en une variable.
Dans ces deux cas, la question (1) a une r\'eponse affirmative. Pour $k=\C(B)$,
c'est le th\'eor\`eme de Graber, Harris et Starr \cite{GHS}. Pour $k=\C((t))$, on sait \'etablir le r\'esultat
comme cons\'equence dudit th\'eor\`eme \cite[Thm. 7.5]{CTCIME}.

Pour $k=\C(B)$ et $k=\C((t))$, A. Pirutka \cite{P} a donn\'e une r\'eponse affirmative
\`a la question (2) lorsque l'on suppose de plus $X$ rationnellement simplement connexe.

D\'ej\`a en dimension 2,  donc pour les surfaces g\'eom\'etriquement rationnelles,
et m\^{e}me pour les corps $k=\C(B)$ et $k=\C((t))$, 
la question (2) est ouverte.

Soit $k$ un corps infini. Soit $X$ une $k$-surface projective, lisse,
g\'eom\'etriquement connexe, munie d'un $k$-morphisme $f : X \to \P^1_{k}$
dont la fibre g\'en\'erique est une conique lisse. Si $k$ est un corps $C_{1}$,
toutes les fibres lisses poss\`edent des $k$-points. 
 Si   $X(k)/R$ est r\'eduit \`a un \'el\'ement, ou m\^eme simplement  est de cardinal
 plus petit que le corps $k$,
alors il existe un $k$-morphisme $g: \P^1_{k} \to X$
qui compos\'e 	avec $f : X \to \P^1_{k}$ est un $k$-morphisme dominant $h :  \P^1_{k}  \to \P^1_{k}$.  
Le produit fibr\'e $Y$ de $f : X \to \P^1_{k}$ et $h : \P^1_{k}  \to \P^1_{k}$
est une $k$-surface fibr\'ee en coniques   $Y \to \P^1_{k}$
qui admet une section : c'est donc une $k$-surface $k$-rationnelle, et elle domine $X$.
On conclut que $X$ est $k$-unirationnelle.
Soit $Z$ une vari\'et\'e complexe de dimension 3 fibr\'ee en coniques
au-dessus du plan projectif $\P^2_{\C}$. Soit $k=\C(t)$. En consid\'erant un pinceau de droites
dans $\P^2_{\C}$, on voit que le corps des fonctions de $Z$
s'identifie au corps des fonctions d'une $k$-surface projective et lisse $X$  fibr\'ee en coniques sur $\P^1_{k}$.
Si $X(k)/R$ est d\'enombrable, l'argument pr\'ec\'edent montre que $X$ est $k$-unirationnelle. Comme $k=\C(t)$,
on conclut que la vari\'et\'e $Z$ est $\C$-unirationnelle.
  Or on ne s'attend pas \`a ce que  toute vari\'et\'e complexe fibr\'ee en coniques sur $\P^2_{\C}$ 
 soit  $\C$-unirationnelle.   On ne s'attend donc pas \`a 
  une r\'eponse affirmative \`a la question (2) en g\'en\'eral.

Dans un r\'ecent article, Zhiyu Tian  \cite{ZT} \'etablit $X(k)/R = 1$ pour les surfaces
de del Pezzo de degr\'e 4 sur le corps $k=\C((t))$.
Dans cette note, j'observe qu'un th\'eor\`eme de Salberger et Skorobogatov \cite{SalSk}
sur les surfaces de del Pezzo de degr\'e 4 sur un  corps parfait
permet d'\'etablir cet \'enonc\'e pour  beaucoup de corps $C_{1}$,
 en particulier pour $k=C(B)$ le corps des fonctions d'une courbe sur
un corps $C$ alg\'ebriquement clos de caract\'eristique z\'ero.

\begin{theo}
  Soit $k$ un corps $C_{1}$  parfait,  infini, de caract\'eristique diff\'erente de 2.
  Soit $X$ une $k$-surface projective et lisse de l'un des types suivants :
  
  (a) Surface de del Pezzo de degr\'e 4, c'est-\`a-dire intersection lisse
  de deux quadriques dans $\P^4_{k}$.
  
  (b) Surface cubique lisse  dans $\P^3_{k}$ poss\'edant une droite d\'efinie sur $k$.
  
  (c) Surface $X$ munie d'une fibration en coniques $X \to \P^1_{k}$
  poss\'edant exactement 5 fibres g\'eom\'etriques r\'eductibles.
  
  Alors tous les $k$-points de $X$ sont $R$-\'equivalents.
\end{theo}

Sur tout corps $k$, on sait   que les classes de surfaces
consid\'er\'ees en (b) et (c) sont identiques.
 On sait
que la contraction d'une $k$-droite sur une surface cubique lisse
est une surface de del Pezzo de degr\'e 4, et que l'\'eclatement
sur une surface de del Pezzo de degr\'e 4
d'un $k$-point non
situ\'e sur une courbe exceptionnelle est une surface cubique lisse
poss\'edant une $k$-droite. 
Pour ces \'enonc\'es classiques, on  consultera \cite{I} et \cite{M}.
On sait aussi (B. Segre, Manin \cite[Theorem 29.4 and 30.1]{M}, \cite[Prop. 2.3]{CTSaSD}, Koll\'ar \cite{K}, Pieropan \cite{Pie}) que les $k$-surfaces consid\'er\'ees  sont $k$-unirationnelles d\`es qu'elles  contiennent un $k$-point.

Sur $k$ un corps   $C_{1}$
toutes les $k$-vari\'et\'es consid\'er\'ees dans l'\'enonc\'e ont 
des $k$-points, et si $k$ est infini  ces $k$-points sont denses pour la topologie
de Zariski. Une surface de type (a) est donc $k$-birationnelle
\`a une surface de type (b), et inversement.
L'ensemble $X(k)/R$ est invariant par \'eclatement  d'un $k$-point.
Il suffira donc d'\'etablir l'\'enonc\'e pour les surfaces de del Pezzo de degr\'e 4.

 \medskip
 
La trivialit\'e de $X(k)/R$ pour $k$ un corps $C_{1}$
de caract\'eristique z\'ero
\'etait connue pour les surfaces rationnelles fibr\'ees en coniques
avec au plus 4 fibres g\'eom\'etriques r\'eductibles  \cite{CTSk},
donc aussi pour les surfaces $X$  de del Pezzo de degr\'e 4
avec ${\rm Pic}(X)$ de rang au moins 2.

La trivialit\'e de $X(k)/R$ pour $k$ un corps $C_{1}$
n'est pas connue pour une surface cubique lisse $X$
$k$-minimale, i.e. avec ${\rm Pic}(X) $ de rang 1.
Sur $k=\C((t))$, c'est \'etabli pour certaines surfaces cubiques dans 
 \cite{ZT}.

\section{D\'emonstration du th\'eor\`eme}

Soit $k$ un corps de caract\'eristique diff\'erente de 2.

\begin{lem}\label{ouvertquadrique}
 Soit $Q \subset \P^n_{k}$, $n \geq 2$ une quadrique
lisse. Soit $U \subset Q$ un ouvert de Zariski. On a $U(k)/R \leq 1$.
 Plus pr\'ecis\'ement, deux $k$-points $P$ et
 $Q$  de $U$ sont directement $R$-li\'es sur $U$.
\end{lem}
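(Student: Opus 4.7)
Je cherche à relier directement deux $k$-points de $U$ que je noterai $P$ et $P'$ (pour éviter le conflit de notation avec la quadrique $Q$) par un ouvert d'une $\P^1_k$ tracée dans $U$. L'outil central est qu'une conique lisse sur $k$ possédant un $k$-point est $k$-isomorphe à $\P^1_k$ (projection stéréographique). Je vais donc construire une conique lisse $C$ définie sur $k$, tracée sur $Q$ et contenant $P$ et $P'$~: alors $C \cap U$ est un ouvert de $C \simeq_k \P^1_k$ contenant $P$ et $P'$, ce qui réalise le lien $R$-direct cherché dans $U$.

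Pour $n = 2$, la quadrique $Q$ est elle-même une conique plane lisse munie d'un $k$-point, donc $Q \simeq_k \P^1_k$, et $U$ est un ouvert de $\P^1_k$ contenant $P$ et $P'$. Supposons désormais $n \geq 3$ et $P \neq P'$, et soit $L$ la $k$-droite de $\P^n_k$ engendrée par ces deux points. Si $L \subset Q$, alors $L \cap U$ est un ouvert de $L \simeq \P^1_k$ contenant $P$ et $P'$, et l'on conclut immédiatement.

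Supposons enfin $L \not\subset Q$. Les 2-plans de $\P^n_k$ contenant $L$ sont paramétrés par un $\P^{n-2}_k$. Pour un tel plan $\Pi$, la section $C_{\Pi} := \Pi \cap Q$ est une conique plane dans $\Pi$ contenant automatiquement $L \cap Q \supseteq \{P, P'\}$. La condition « $C_{\Pi}$ singulière » est donnée par l'annulation d'un discriminant sur $\P^{n-2}_k$~; la lissité de $Q$ jointe à $L \not\subset Q$ assure que ce discriminant n'est pas identiquement nul (vérification classique sur $\overline{k}$, dans l'esprit de Bertini pour les sections planes d'une quadrique lisse). Son lieu de non-annulation est donc un ouvert non vide de $\P^{n-2}_k$, lequel contient un $k$-point $\Pi_0$ dès que $k$ est infini~; la conique $C := C_{\Pi_0}$ est alors lisse, dotée du $k$-point $P$, et fournit la courbe rationnelle voulue.

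Le point technique principal est ainsi l'existence d'un plan $\Pi_0$ rationnel donnant une section lisse, laquelle se ramène à la non-annulation générique du discriminant et provient uniquement de la lissité de $Q$ et de $L \not\subset Q$. L'hypothèse d'infinitude de $k$ intervient pour trouver un $k$-point dans le complémentaire du lieu discriminant~; pour $k$ fini, le cas $|U(k)| \leq 1$ est trivial et les situations restantes demanderaient une analyse ponctuelle, non pertinente pour l'application visée.
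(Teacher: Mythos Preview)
The paper does not give its own proof here; it simply cites \cite[Lemma 3.22~(ii)]{CTSaSD}. Your direct geometric argument --- cutting $Q$ by a $k$-rational $2$-plane through $P$ and $P'$ to obtain a smooth conic --- is correct over an infinite field, and that already suffices for every use of the lemma in this paper (Lemme~\ref{ouvertfamillequadriques} and Th\'eor\`eme~\ref{principal} both assume $k$ infinite).

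One refinement removes the infiniteness hypothesis and at the same time makes your ``v\'erification classique'' completely explicit. Let $q$ be the defining quadratic form on $V=k^{n+1}$, with polar bilinear form $b$, and lift $P,P'$ to isotropic vectors $v,v'$. The hypothesis $L\not\subset Q$ reads $b(v,v')\neq 0$, so $q$ is nondegenerate on $W=\langle v,v'\rangle$; hence $V=W\oplus W^{\perp}$ with $q|_{W^{\perp}}$ nondegenerate of rank $n-1\geq 2$. A nondegenerate quadratic form in characteristic $\neq 2$ is never identically zero on $k$-points, so there exists an anisotropic $w\in W^{\perp}$. For $\Pi=\P\langle v,v',w\rangle$ the Gram matrix is block-diagonal with determinant $-b(v,v')^{2}\,q(w)\neq 0$, so $C_{\Pi}$ is a smooth conic through $P$ and $P'$. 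This explicit choice works over every field of characteristic $\neq 2$, without any genericity or Bertini-type step, and is presumably what underlies the cited lemma.
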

\begin{proof}
\cite[Lemma 3.22 (ii)]{CTSaSD}.
\end{proof}

\begin{lem}\label{ouvertfamillequadriques}
Soit   
$U \subset \P^m_{k} $ un ouvert  non vide. Soit $n \geq 4$. Soit 
$X \subset \P^n_{U}$ une famille lisse de quadriques.
Si $k$ est un corps $C_{1}$,
pour tout ouvert Zariski  non vide $W \subset X$, on 
 a $W(k)/R=1$. Plus pr\'ecis\'ement, deux $k$-points $P$ et
 $Q$ de $W$  sont directement $R$-li\'es sur $W$.
  \end{lem}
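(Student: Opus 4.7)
Soient $P, Q \in W(k)$ et $p, q \in U(k)$ leurs projetés. Si $p = q$, $P$ et $Q$ sont dans la même quadrique lisse $X_p$, et le Lemme \ref{ouvertquadrique} appliqué à l'ouvert $W \cap X_p$ fournit la conclusion. Supposons donc $p \neq q$. En restreignant à la $k$-droite $L \subset \P^m_k$ joignant $p$ à $q$ et en posant $X_L := X \times_U (L \cap U)$, on se ramène au cas $m = 1$, où $X_L$ est une famille lisse de quadriques dans $\P^n$ sur l'ouvert $L \cap U$ de $L \simeq \P^1_k$, contenant $P$ et $Q$.

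Comme $k$ est $C_1$, le corps $k(t) := k(L)$ est $C_2$ (théorème de Lang), et la fibre générique $X_\eta \subset \P^n_{k(t)}$, définie par une forme quadratique en $n + 1 \geq 5 > 2^2$ variables, possède un $k(t)$-point. Par propreté de $X_L \to L \cap U$, ce point s'étend en une section régulière $\sigma_0 : L \cap U \to X_L$ ; la projection depuis $\sigma_0$ vers un hyperplan fixe $H \subset \P^n$ définit un $(L \cap U)$-iso\-mor\-phisme birationnel $X_L \dashrightarrow \P^{n-1}_k \times (L \cap U)$. En particulier, $X_L$ est $k$-rationnelle, et via cet isomorphisme birationnel (suivi de l'extension par propreté), chaque $k$-morphisme $L \cap U \to \P^{n-1}_k$ produit une section régulière de $X_L \to L \cap U$ ; l'ensemble des sections est donc vaste.

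On choisit alors une section $\sigma$ satisfaisant les conditions génériques suivantes : $\sigma(p) \neq P$, $\sigma(q) \neq Q$, et les droites de $\P^n$ joignant $\sigma(p)$ à $P$ et $\sigma(q)$ à $Q$ ne sont contenues respectivement ni dans $X_p$ ni dans $X_q$. Ces conditions garantissent que la projection $\Phi : X_L \dashrightarrow \P^{n-1}_k \times (L \cap U)$ depuis $\sigma$ est définie en $P, Q$ avec images $(P_*, p)$ et $(Q_*, q)$, et que $\Phi^{-1}$ est défini en ces images. Considérons un $k$-morphisme $\psi : \P^1_k \to \P^{n-1}_k \times L$ de bidegré $(1,1)$ envoyant $0$ sur $(P_*, p)$ et $\infty$ sur $(Q_*, q)$, obtenu comme produit d'une $k$-droite dans $\P^{n-1}$ joignant $P_*$ à $Q_*$ (éventuellement réduite à un point si $P_* = Q_*$) et d'une identification $\P^1_k \simeq L$ envoyant $0 \mapsto p, \infty \mapsto q$. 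Le composé $\phi := \Phi^{-1} \circ \psi : \P^1_k \dashrightarrow X_L$ est un $k$-morphisme rationnel défini en $0$ et $\infty$, avec $\phi(0) = P$ et $\phi(\infty) = Q$ ; son lieu d'indétermination est un sous-ensemble fini de $\P^1_k$ ne contenant ni $0$ ni $\infty$. Restreint à l'ouvert $\phi^{-1}(W) \subset \P^1_k$ (qui contient $0$ et $\infty$), $\phi$ fournit le $k$-morphisme d'un ouvert de $\P^1_k$ vers $W$ passant par $P$ et $Q$, établissant la $R$-équivalence directe sur $W$.

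L'obstacle principal est la vérification soigneuse que la section $\sigma$ peut être choisie avec les conditions génériques requises et que les cartes birationnelles $\Phi, \Phi^{-1}$ se comportent correctement aux points $P, Q$ et à leurs images. Ces vérifications, techniques, reposent sur la $k$-rationalité de $X_L$ et l'abondance de ses sections régulières, à comparer à deux contraintes ponctuelles en $p$ et en $q$.
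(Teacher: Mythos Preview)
Votre démarche et celle de l'article coïncident jusqu'au point clé : réduction à la droite $L$ joignant les projetés, utilisation du caractère $C_2$ de $k(L)$ pour obtenir une section $\sigma_0$ de $X_L \to L\cap U$, puis $k(L)$-rationalité de la fibre générique. C'est dans la dernière étape --- produire une section passant par $P$ et par $Q$ --- que les deux preuves divergent.

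L'article invoque directement l'invariance birationnelle de l'approximation faible : puisque $Y_\eta$ est $k(L)$-birationnelle à $\P^{n-1}_{k(L)}$, elle satisfait l'approximation faible, et on trouve donc un $k(L)$-point se spécialisant en $P$ au-dessus de $A$ et en $Q$ au-dessus de $B$. C'est un argument en une phrase, qui n'exige aucune hypothèse supplémentaire sur $k$.

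Votre construction, elle, déplie cet argument à la main : vous choisissez une seconde section $\sigma$ en position générique par rapport à $P$ et $Q$, projetez depuis $\sigma$ pour obtenir une carte birationnelle $\Phi$ bien définie en $P,Q$ et d'inverse bien défini en leurs images, puis relevez une droite de $\P^{n-1}\times L$. C'est correct en principe, et vos conditions sur $\sigma$ (éviter $T_P X_p\cap X_p$ et $T_Q X_q\cap X_q$) sont exactement celles qui garantissent que $\Phi$ et $\Phi^{-1}$ se comportent bien. Mais, comme vous le reconnaissez vous-même, la vérification que $\sigma$ existe reste à faire ; elle repose sur un argument de densité de Zariski des sections et demande implicitement que $k$ soit infini (pour trouver des $k$-points dans les ouverts complémentaires des mauvais lieux). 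L'énoncé du lemme ne fait pas cette hypothèse, et la preuve de l'article s'en passe. En somme, vous reprouvez l'approximation faible en deux places pour une quadrique pointée, là où l'article la cite ; cela donne une preuve plus géométrique mais plus laborieuse, avec une étape de généricité laissée à l'état d'esquisse.
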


\begin{proof}
Notons $p : X \to U$ la projection naturelle.
Quitte \`a remplacer $U$ par l'ouvert $p(W)$, on peut supposer
$U=p(W)$. Soient $P,Q \in W(k)$, et soient $A=p(P)$ et $B=p(Q)$.
Si $A=B$, alors $P$ et $Q$ sont deux $k$-points de la quadrique lisse
$X_{A} \subset \P^n_{k}$, et $P$ et $Q$ sont directement $R$-li\'es
dans l'ouvert $W \cap X_{A} \subset X_{A}$ d'apr\`es le lemme
\ref{ouvertquadrique}, donc aussi sur $W$. Supposons $A \neq B$.
Soit $L \subset  \P^m_{k} $ la droite joignant $A$ et $B$.
La restriction 
 de $p : X \to U$ au-dessus de $L\cap U$
est une famille lisse de quadriques $Y \to L\cap U$ qui poss\`ede une section
rationnelle car le corps $k(L)$ est un corps $C_{2}$ et
toute forme quadratique en au moins 5 variables sur un tel corps
a un point rationnel.
Comme $L\cap U$ est r\'egulier de dimension 1, toute
telle section est morphique.
La fibre g\'en\'erique $Y_{\eta}$ de la famille $Y \to L\cap U$ est une quadrique lisse
qui poss\`ede un point rationnel sur $k(\eta)=k(L)$, donc est
$k(L)$-birationnelle \`a un espace projectif sur le corps $k(L)$.
On sait que la propri\'et\'e d'approximation faible est invariante
par de tels isomorphismes. On peut donc trouver un
$k(L)$-point de $Y_{\eta}$ qui se sp\'ecialise en $P$ au-dessus de $A$
et en $Q$ au-dessus de~$B$.  Le $k$-morphisme
$\theta  : L\cap U \to Y \subset X$ ainsi d\'efini est une $R$-\'equivalence directe entre $P$ et $Q$.
Comme $P$ et $Q$ appartiennent \`a $W$, le $k$-morphisme $\theta^{-1}(W)\subset L$ est
non vide et
$\theta^{-1}(W) \to W$ est une $R$-\'equivalence directe entre $P$ et $Q$ dans $W$.
\end{proof}

\begin{lem}\label{ouverttorseur}
Soient $X$ une $k$-vari\'et\'e int\`egre, $S$ un $k$-groupe r\'eductif connexe,
et $p : {\mathcal T} \to X$ un torseur sur $X$ sous $S$.
Soit $W \subset \mathcal T$ un ouvert,
et soit $U=p(W) \subset X$ l'ouvert qui est son image par $p$.
Si $k$ est un corps infini de dimension cohomologique au plus 1,
l'application $W(k) \to U(k)$ est surjective.
\end{lem}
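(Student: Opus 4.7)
Le plan est de relever tout $k$-point $x \in U(k)$ en un $k$-point de $W$. Je commencerais par observer que $p : \mathcal{T} \to X$ est un morphisme lisse (puisque $S$ l'est), donc ouvert, de sorte que $U = p(W)$ est bien un ouvert non vide de $X$. Pour $x \in U(k)$, la fibre $\mathcal{T}_{x} := p^{-1}(x)$ est alors un torseur sur $\Spec k$ sous le $k$-groupe $S$, et $W \cap \mathcal{T}_{x}$ est un ouvert non vide de cette fibre. Tout se ram\`ene ainsi \`a produire un $k$-point de $W \cap \mathcal{T}_{x}$.

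Dans un second temps, j'invoquerais le th\'eor\`eme de Steinberg (conjecture I de Serre, voir \emph{Cohomologie galoisienne}, III.2) : pour tout corps $k$ de dimension cohomologique au plus 1 et tout $k$-groupe alg\'ebrique lin\'eaire connexe $G$, on a $H^1(k, G) = 1$. Appliqu\'e \`a la classe du $S$-torseur $\mathcal{T}_{x}$, ce r\'esultat en assure la trivialit\'e, et fournit un $k$-isomorphisme de $k$-vari\'et\'es $\mathcal{T}_{x} \simeq S$.

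La conclusion viendrait d'un argument de densit\'e : tout $k$-groupe r\'eductif connexe est $k$-unirationnel (Borel), et comme $k$ est infini, $S(k)$ est Zariski-dense dans $S$. Transport\'e via l'isomorphisme pr\'ec\'edent, $\mathcal{T}_{x}(k)$ est Zariski-dense dans $\mathcal{T}_{x}$, et rencontre donc l'ouvert non vide $W \cap \mathcal{T}_{x}$. Un tel point d'intersection fournit le $k$-point de $W$ d'image $x$ recherch\'e.

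Les deux ingr\'edients employ\'es (annulation du $H^1$ galoisien sur les corps de dimension cohomologique au plus 1, et densit\'e des points rationnels d'un $k$-groupe r\'eductif connexe sur un corps infini) sont classiques, et aucun obstacle s\'erieux n'est \`a pr\'evoir dans l'assemblage.
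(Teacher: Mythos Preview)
Your proposal is correct and follows essentially the same approach as the paper: for a $k$-point of $U$, the fiber is a torsor under $S$ which is trivial because $\cd(k)\leq 1$, hence isomorphic to $S$, and the $k$-points of $S$ are Zariski-dense by unirationality of connected reductive groups over infinite fields, so the open $W\cap\mathcal{T}_x$ has a $k$-point. The paper's proof is the same argument, slightly more tersely stated (it does not spell out why $U$ is open, and attributes the unirationality to Chevalley--Rosenlicht--Grothendieck rather than Borel).
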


\begin{proof}
Soit $P \in U(k)$. La fibre $U_{P}$ de $W \to U$ est
un ouvert  non vide de la $k$-vari\'et\'e $ {\mathcal T}_{P}$,
qui est un espace principal homog\`ene sous le $k$-groupe r\'eductif connexe $S$.
Comme la dimension cohomologique de $k$ est  au plus 1,
cet espace principal homog\`ene est trivial, donc la $k$-vari\'et\'e 
$ {\mathcal T}_{P}$ est $k$-isomorphe \`a $S$. 
Tout $k$-groupe r\'eductif connexe sur un corps infini est
$k$-unirationnel (Chevalley, Rosenlicht, Grothendieck \cite[Thm. 18.2]{B}), 
les  $k$-points sont donc
Zariski denses. Ainsi $U_{P}(k) \neq \emptyset$.
\end{proof}

Rappelons le th\'eor\`eme \cite[Thm. 3.8]{SalSk}.

\begin{theo}[Salberger et Skorobogatov]\label{SalSko}
Soit $k$ un corps parfait infini de caract\'eristique diff\'erente de 2.
Soit $X$ une surface de del Pezzo de degr\'e 4
et $ p : {\mathcal T} \to X$ un torseur universel sur $X$
tel que ${\mathcal T}(k) \neq \emptyset$.
Alors il existe  un ouvert  $\Omega$ du plan projectif $\P^2_{k}$ et
 une $k$-vari\'et\'e g\'eom\'etriquement int\`egre $Y$
avec les propri\'et\'es suivantes :

(i) La $k$-vari\'et\'e $Y \times_{k} \P^3_{k}$ est $k$-birationnelle \`a $ {\mathcal T}  \times_{k}\P^1_{k}$.

(ii) La $k$-vari\'et\'e $Y$ est un ouvert dans une famille lisse de quadriques dans
$\P^5_{\Omega}$.
 \end{theo}

\begin{theo}\label{principal}
 Soit $k$ un corps $C_{1}$, parfait,  infini, de caract\'eristique diff\'erente de 2.
 Soit $X$ une
  $k$-surface   de del Pezzo de degr\'e 4.  L'ensemble
$X(k)/R$ est r\'eduit \`a un \'el\'ement.
\end{theo}
\begin{proof}
 Comme le corps $k$ est $C_{1}$ et que $X$ est d\'efinie par l'annulation simultan\'ee de deux formes quadratiques
 en 5 variables, on a $X(k) \neq \emptyset$.
Soit  $p : {\mathcal T} \to X$ le torseur universel  \cite[(2.0.4)]{CTSa} 
de fibre triviale en un $k$-point de $X$.
 Ce torseur a une \'evaluation triviale
en tout $k$-point de $X$. En effet le corps $k$ \'etant $C_{1}$ est de dimension
cohomologique au plus $1$, et donc
 $H^1(k,S)=0$ pour tout $k$-tore $S$.
L'application 
induite ${\mathcal T}(k) \to X(k)$ est donc  surjective  \cite[Lemme 2.7.1]{CTSa}.

Gardons les notations du th\'eor\`eme  \ref{SalSko}.
Notons  $U \subset {\mathcal T} \times_{k}
\P^1_{k}$ et $V \subset Y \times_{k} \P^3_{k}$
des $k$-ouverts non vides isomorphes.

Soit $p_{1} :  U \to X$ le morphisme lisse compos\'e de l'inclusion
$U \subset {\mathcal T} \times_{k}
\P^1_{k}$, de la projection   ${\mathcal T} \times_{k}
\P^1_{k} \to  {\mathcal T}$ et
de $ {\mathcal T} \to X$.  
Soit $X_{0} \subset X$ l'ouvert image de $p_{1}$.
Il r\'esulte du lemme \ref{ouverttorseur} que l'application induite
$U(k) \to X_{0}(k)$ est surjective.

Soient $P,Q$ deux $k$-points de $V$.
Soit $V_{1} \subset Y$ l'ouvert image de $V$ par la projection
$Y \times_{k} \P^3_{k} \to Y$, et soient $A$ et $B$ dans $V_{1}(k)$
les $k$-points images
de $P$ et $Q$. 
D'apr\`es le lemme \ref{ouvertfamillequadriques},
les points $A$ et $B$ sont directement $R$-li\'es sur $V_{1}$.
Il existe donc un $k$-morphisme $D \to V_{1}$
d'un ouvert $D \subset \P^1_{k}$ tel que $A$ et $B$
soient dans l'image de $D(k)$. La restriction de $V \to V_{1}$
\`a $D$ est une famille d'ouverts de $\P^3$.
L'argument d'approximation faible au-dessus du corps
$k(D)$ utilis\'e dans la d\'emonstration du lemme \ref{ouvertfamillequadriques}
\'etablit l'existence d'un $k$-morphisme $D' \to V$, avec $D'$ ouvert de $D$
relevant $D \to V_{1}$, tel que $P$ et $Q$ soient dans l'image de $D'(k)$.
Ainsi $P$ et $Q$ sont directement $R$-li\'es sur $V$.

Combin\'e avec la surjectivit\'e de $U(k) \to X_{0}(k)$,
cela \'etablit que deux $k$-points quelconques de l'ouvert non vide $X_{0}$
sont directement $R$-li\'es. 

Pour \'etablir $X(k)/R=1$, on utilise   la
proposition 3.24 de
\cite[\S 3, Appendice B]{CTSaSD}, qui garantit que l'application
$X_{0}(k) \to X(k)/R$ est surjective.
Cette proposition est \'enonc\'ee sur un corps de caract\'eristique
z\'ero.  Dans le cas consid\'er\'e ici d'une $k$-surface de del Pezzo
de degr\'e 4, $k$-unirationnelle, on v\'erifie que l'argument donn\'e
l\`a, qui utilise la $k$-unirationalit\'e  de $X$,  mentionn\'ee dans l'introduction,  et  \cite[\S 3, Appendice B,  Prop. 3.23]{CTSaSD},
vaut sur tout corps $k$  infini de caract\'eristique diff\'erente de 2.
 \end{proof}

 \begin{rema}  
   Dans la d\'emonstration du th\'eor\`eme \ref{principal}, l'hypoth\`ese que le corps $k$ est  parfait 
 vient de cette m\^eme hypoth\`ese dans le  th\'eor\`eme \ref{SalSko}.  
   \end{rema}


\begin{thebibliography}{99}

\bibitem{B}  A. Borel, Linear algebraic groups, Second enlarged edition, Graduate Texts in Mathematics
{\bf 126} Springer Verlag (1991).
 
\bibitem{CTSa} J.-L. Colliot-Th\'el\`ene et J.-J. Sansuc, La descente sur les vari\'et\'es rationnelles, II.
Duke Math. J.  {\bf 54} (1987) 375--492.

\bibitem{CTSaSD} J.-L. Colliot-Th\'el\`ene,  J.-J. Sansuc et P. Swinnerton-Dyer, Intersections of two quadrics
and Ch\^atelet surfaces, I. Journal f\"ur die reine und angewandte Mathematik {\bf 373} (1987) 37--107.


\bibitem{CTCIME} J.-L. Colliot-Th\'el\`ene, Vari\'et\'es presque rationnelles, leurs points rationnels et leurs d\'eg\'en\'erescences,
in {\it Arithmetic Geometry}  (CIME 2007), ed. P.~Corvaja et C.~Gasbarri, Springer LNM {\bf 2009}  (2011), p. 1--44.


\bibitem{CTSk} J.-L. Colliot-Th\'el\`ene et A. N. Skorobogatov,  $R$-equivalence on conic bundles of degree 4,
Duke Math. J.  {\bf 54} (1987)
671--677.

\bibitem{GHS} T. Graber, J. Harris et J. Starr, Families of rationally connected varieties, 
J. Amer. Math. Soc. {\bf 16} (2002) 57--67.

\bibitem{I}  V. A. Iskovkikh,
 Mod\`eles minimaux des surfaces rationnelles sur les corps arbitraires (en russe),
Izv. Akad. Nauk SSSR Ser. Mat. {\bf 43} (1979), no. 1, 19--43, 237. Trad. anglaise, Math. USSR Sbornik {\bf 14} (1980) 17--39.


\bibitem{Klivre} J. Koll\'ar,  Rational curves on algebraic varieties, Ergebnisse der Mathematik und ihrer Grenzgebiete, 3. Folge, Band {\bf 32}. Springer, Heidelberg  1996.

\bibitem{K} J. Koll\'ar, Looking for rational curves on cubic hypersurfaces, notes by Ulrich Derenthal, NATO Sci. Peace Secur. Ser. D Inf. Commun. Secur., Vol. {\bf 16}, Higher-dimensional geometry over finite fields, 92--122, IOS, Amsterdam, 2008. 

\bibitem{M} Yu. I. Manin,  Cubic forms, Algebra, geometry, arithmetic,   North-Holland Mathematical Library Vol. {\bf 4}. North-Holland Publishing Co., Amsterdam, second edition, 1986. Translated from the Russian by
M. Hazewinkel.


\bibitem{Pie}  M. Pieropan, On the unirationality of del Pezzo surfaces over an arbitrary field. Master thesis, available
at http://www.algant.eu/documents/theses/pieropan.pdf, 2012.


\bibitem{P} A. Pirutka,  $R$-equivalence on low degree complete intersections,
 J. Algebraic Geom. {\bf 21} (2012), 707--719.

\bibitem{SalSk}
P. Salberger et A. Skorobogatov, Weak approximation for surfaces defined by two
quadratic forms, Duke Math. J. {\bf 63} (1991) 517--536.

\bibitem{SD}  H. P. F. Swinnerton-Dyer,
Universal equivalence for cubic surfaces over finite and local fields. Symposia Mathematica, Vol. XXIV (Sympos., INDAM, Rome, 1979), pp. 111--143, Academic Press, London-New York, 1981. 

\bibitem{ZT} Zhiyu Tian,  $R$-equivalence on del Pezzo surfaces of degree 4 and cubic surfaces,  
\tt{http://arxiv.org/abs/1307.1801v3}
 

\end{thebibliography}
\end{document}